\documentclass[12pt,reqno]{amsart}
\usepackage{amsfonts}
\usepackage{amssymb,color}
\usepackage{amssymb}

\usepackage[colorlinks=true]{hyperref}
\hypersetup{urlcolor=blue, citecolor=blue}

\usepackage[margin=2.2cm, a4paper]{geometry}

\newtheorem{theo}{Theorem}[section]
\newtheorem{lemm}[theo]{Lemma}
\newtheorem{defi}[theo]{Definition}

\numberwithin{equation}{section}

\newcommand{\bal}{\begin{align}}
\newcommand{\bbal}{\begin{align*}}
\newcommand{\beq}{\begin{equation}}
\newcommand{\eeq}{\end{equation}}
\newcommand{\bca}{\begin{cases}}
\newcommand{\eca}{\end{cases}}

\newcommand{\pa}{\partial}

\newcommand{\na}{\nabla}
\newcommand{\De}{\Delta}
\newcommand{\de}{\delta}

\newcommand{\cd}{\cdot}
\newcommand{\ep}{\varepsilon}
\newcommand{\dd}{\ \mathrm{d}}
\newcommand{\B}{\dot{B}}
\newcommand{\LL}{\tilde{L}}
\newcommand{\R}{\mathbb{R}}

\newcommand{\D}{\mathrm{div} \ }
\newcommand{\uu}{\mathbf{u}}

\newcommand{\U}{\mathbf{U}}

\newcommand{\XX}{\mathrm{X}}
\newcommand{\YY}{\mathrm{Y}}
\newcommand{\DDe}{\dot{\Delta}}

\newcommand{\Z}{\mathbb{Z}}

\begin{document}

\subjclass[2010]{35Q35 35B30}
\keywords{2D viscous shallow water equations, ill-posedness.}

\title[Ill-posedness for the viscous shallow water equations]{Ill-posedness for the 2D viscous shallow water equations in the critical Besov spaces}

\author[J. Li]{Jinlu Li}
\address{School of Mathematics and Computer Sciences, Gannan Normal University, Ganzhou 341000, China}
\email{lijinlu@gnnu.edu.cn}

\author[P. Hong]{Pingzhou Hong}
\address{School of Mathematics and Computer Sciences, Gannan Normal University, Ganzhou 341000, China}
\email{pzhong-66@163.com}

\author[W. Zhu]{Weipeng Zhu}
\address{School of Mathematics and Information Science, Guangzhou University, Guangzhou 510006, China}
\email{mathzwp2010@163.com}

\begin{abstract}
In this paper, we prove that the 2D viscous shallow water equations is ill-posed in the critical Besov spaces $\B^{\frac2p-1}_{p,1}(\R^2)$ with $p>4$. Our proof mainly depends on the method introduced by Bourgain-Pavlovi\'c in \cite{B-P} and Chen-Miao-Zhang in \cite{C-M-Z4}.
\end{abstract}

\maketitle

\section{Introduction and main result}

In the paper, we consider the following 2D viscous shallow water equations:
\bal\label{1.1}\bca
\pa_th+\mathrm{div}(h\uu)=0,\\
h(\pa_t\uu+\uu\cdot\nabla\uu)-\nu\nabla\cdot(h\nabla \uu)+h\nabla h=0, \\
h(0,x)=h_0,\quad \uu(0,x)=\uu_0,
\eca\end{align}
where $h(t,x)$ represents the height of the fluid level above the solid bed, $\uu(t,x)=(\uu^1(t,x),\uu^2(t,x))$ is the horizontal velocity field and $\nu>0$ is the viscous coefficient.

The viscous shallow water equations have been widely studied by mathematicians, cf., \cite{D, D.B} and references therein. In \cite{bui}, by using Lagrangian coordinates and H\"{o}lder space estimates, Bui obtained the local existence and uniqueness of classical solutions
to the Cauchy-Dirichlet problem for \eqref{1.1} with initial data in $C^{2+\alpha}$. By using the energy method of Matsumura and Nishida \cite{M-N}, Kloeden \cite{K} and Sundbye \cite{S1} showed the global existence and uniqueness of classical solutions to the Cauchy-Dirichlet problem for \eqref{1.1}. Subsequently, the existence and uniqueness of classical solutions to the Cauchy problem for \eqref{1.1} was also proved by Sundbye \cite{S2}. By applying the Littlewood-Paley decomposition
theory for Sobolev spaces to obtain a losing energy estimate in $H^{s+2}$ for any $s>0$, Wang and Xu \cite{W} showed that the solution of \eqref{1.1} exists locally and uniquely for all initial data $u_0$ and exists globally for small initial data $u_0$ if $h_0-\bar{h}_0$ is small enough. Lately, Liu and Yin \cite{L-Y,L-Y2,L-Y1} improved the result of \cite{W} in the Sobolev spaces with low regularity and inhomogeneous Besov spaces.

In \cite{C-M-Z3}, Chen, Miao and Zhang developed a new method which relies on the smoothing
properties of the heat equations and introduced some kind of weighted Besov norms to study
the well-posedness of \eqref{1.1} for the initial data with the minimal regularity and prove the
local well-posedness under the more natural assumption that the initial height is bounded
away from zero. They obtained the local well-posedness of the system \eqref{1.1} for general initial
data in critical Besov spaces with $L^p$ type. Now, let us recall some important progress about
the global existence results for small data. Motivated by the ideas of Danchin \cite{D1,D2}, Chen-Miao-Zhang \cite{C-M-Z} proved the global well-posedness of the system \eqref{1.1} for small initial data
in critical Besov spaces with $L^2$ type. Chen-Miao-Zhang \cite{C-M-Z2} and Charve-Danchin \cite{C-D} obtained
the global well-posedness of the system \eqref{1.1} in the hybrid Besov spaces, in which the part of
high frequency of the initial data lies in the critical Besov spaces with $L^p$ type integrability. Subsequently, Haspot \cite{H1} improved the results of \cite{C-D,C-M-Z2} by a smart use of the viscous effective flux. For more results of the solutions to \eqref{1.1}, we refer the reader to see \cite{Dan17,Fan18,H2}.

For the sake of convenience, we take $\bar{h}_0=1$ and $\nu=1$. Substituting $h$ by $1+h$ in \eqref{1.1}, we have
\bal\label{1.2}\bca
\pa_th+\D \uu+\uu\cd \na h=h\D\uu,\\
\pa_t\uu+\uu\cd \na \uu-\De \uu+\na h=\na(\ln(1+h))\cd \na \uu\\
h(0,x)=h_0,\quad \uu(0,x)=\uu_0.
\eca\end{align}

By \cite{C-M-Z3}, the system \eqref{1.2} is locally well-posed in the critical Besov space with $1\leq p<4$. However, the problem whether the system \eqref{1.2} is well-posedness in the critical Besov space with $p>4$ is unsolved.  Recently,  Chen-Miao-Zhang \cite{C-M-Z4} proved the ill-posedness of the 3D compressible Navier-Stokes equations in critical Besov spaces with $p>6$. Motivated by Bourgain-Pavlovi\'c \cite{B-P} and Chen-Miao-Zhang \cite{C-M-Z4}, we prove that the system \eqref{1.2} is ill-posed in the critical Besov spaces with $p>4$. Our main ill-posedness result reads as follows.
\begin{theo}\label{th1}
Let $p>4$. For any $\de>0$, there exists initial data satisfying
\bbal
||h_0||_{\B^{\frac2p}_{p,1}}+||\uu_0||_{\B^{\frac2p-1}_{p,1}}\leq \de,
\end{align*}
such that a solution $(h,\uu)$ for the system \eqref{1.2} satisfies
\bbal
||\uu(t)||_{\B^{\frac2p-1}_{p,1}}\geq \frac1\de, \quad \mathrm{for} \ \mathrm{some} \quad 0<t<\de.
\end{align*}
\end{theo}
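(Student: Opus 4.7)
The plan is to reproduce, in the two-dimensional shallow-water setting, the norm inflation mechanism introduced by Chen--Miao--Zhang \cite{C-M-Z4} for the 3D compressible Navier--Stokes system. The source of the pathology is the quadratic term $\na\ln(1+h)\cd\na\uu$ on the right-hand side of the momentum equation in \eqref{1.2}: at the natural critical regularities, $\na h$ and $\na\uu$ both live only in $\B^{\frac2p-1}_{p,1}$, and in two space dimensions the corresponding product law $\B^{s_1}_{p,1}\cdot\B^{s_2}_{p,1}\hookrightarrow\B^{s_1+s_2-2/p}_{p,1}$ requires $s_1+s_2>0$, which fails as soon as $p\geq 2$. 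For $p>4$ I intend to quantify this obstruction by manufacturing an explicit high-frequency to low-frequency resonance that pushes $\|\uu(t)\|_{\B^{\frac2p-1}_{p,1}}$ past $1/\de$ on an arbitrarily short time interval.

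For a large integer $n$ to be chosen and a Schwartz bump $\varphi$ whose Fourier transform is supported in a tiny ball, I would take
\beq\label{plan-data}
h_0(x)=\alpha_n\,\varphi(x)\cos(2^n x_1),\qquad \uu_0(x)=\beta_n\,\varphi(x)\cos(2^n x_1)\,\ee,
\eeq
with $\alpha_n=\de\,2^{-2n/p}$ and $\beta_n=\de\,2^{n(1-2/p)}$. A direct Littlewood--Paley computation then yields $\|h_0\|_{\B^{2/p}_{p,1}}+\|\uu_0\|_{\B^{2/p-1}_{p,1}}\lesssim\de$, as required. Writing the momentum equation of \eqref{1.2} in Duhamel form and invoking the short-time a priori estimates of \cite{C-M-Z3,H2} to keep $h$ close to $h_0$ and $\uu$ close to $e^{t\De}\uu_0$ on the interval $t\leq\de$, the leading correction to the linear profile $e^{t\De}\uu_0$ is essentially the second Picard iterate produced by the resonant term $\na\ln(1+h)\cd\na\uu\approx\na h\cd\na\uu$, namely
\beq\label{plan-iter}
I_n(t):=\int_0^t e^{(t-s)\De}\bigl[\na h_0\cd\na\, e^{s\De}\uu_0\bigr]\dd s.
\eeq

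The trigonometric identity $\sin(2^n x_1)\sin(2^n x_1)=\tfrac12(1-\cos(2\cd 2^n x_1))$ shows that the integrand of \eqref{plan-iter} carries a purely low-frequency piece of amplitude $\tfrac12\alpha_n\beta_n\,2^{2n}\,e^{-cs\,2^{2n}}\varphi^2$; the remaining high-frequency piece at $2\cd 2^n\ee$ is easily absorbed by the usual critical-space estimates. Since the heat semigroup $e^{(t-s)\De}$ is harmless at low frequencies, integrating in $s$ over the window $2^{-2n}\lesssim s\leq t$ yields $I_n(t)\approx\tfrac12\alpha_n\beta_n\,\varphi^2\,\ee$, a bump localized at unit scale, so that
\bbal
\|I_n(t)\|_{\B^{\frac2p-1}_{p,1}}\gtrsim \alpha_n\beta_n=\de^2\,2^{n(1-4/p)}.
\end{align*}
Since $p>4$ makes the exponent $1-4/p$ strictly positive, choosing $n$ so large that both $2^{-2n}\ll\de$ and $\de^2\,2^{n(1-4/p)}>2/\de$ produces the claimed norm inflation at some time $t\in(2^{-2n+1},\de)$.

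The main obstacle is the clean isolation of this resonant low-frequency contribution. One has to verify that neither the heat semigroup, nor the paraproduct structure of the remaining terms of \eqref{1.2} (the convective term $\uu\cd\na\uu$, the linear coupling $\na h$, and the evolution correction $h(s)-h_0$, all of which for our data generate only $\cos\cd\sin$-type cross products supported at high frequency), nor the nonlinear error $\ln(1+h)-h$, introduces a cancellation that kills \eqref{plan-iter}. A secondary technical point is the remainder estimate for $\uu(t)-e^{t\De}\uu_0-I_n(t)$, uniform in $n$; here I plan to rely on the sharp transport estimates for the continuity equation from \cite{C-M-Z3} together with the narrow Fourier localization of the data to absorb the remainder by a negative power of $2^n$ or of $\de$.
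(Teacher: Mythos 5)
Your overall template --- Chen--Miao--Zhang-type norm inflation driven by the second Picard iterate of a quadratic term on the time scale $t\sim 2^{-2n}$, with the exponent $1-\frac4p>0$ supplying the growth --- is the right one, and your leading-order computation $I_n(t)\approx\tfrac12\alpha_n\beta_n\varphi^2\,\ee$ is formally consistent. But you have picked a different resonant term than the paper (which takes $h_0=0$ and extracts the inflation from the convection term $\uu\cd\na\uu$, using a two-component velocity oscillating in the direction $(1,1)$ precisely so that the cross terms $\uu_0^2\pa_2\uu_0^1+\uu_0^1\pa_1\uu_0^2$ survive the $\cos\cd\sin$ cancellation), and your choice leaves the decisive step unaddressed. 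The entire burden of the proof is the remainder estimate, i.e.\ constructing the solution and showing that $\uu-e^{t\De}\uu_0-I_n$ is small in $\B^{\frac2p-1}_{p,1}$. You propose to do this by ``invoking the short-time a priori estimates of \cite{C-M-Z3,H2} to keep $h$ close to $h_0$ and $\uu$ close to $e^{t\De}\uu_0$'', but those estimates are available only for $p<4$; for $p>4$ there is no critical well-posedness theory to invoke, which is the whole point of the theorem. The paper spends most of its length on exactly this step: it introduces intermediate exponents $2\le p*<q<4<r<q*<p$, measures the remainder and the height in $\XX_T=||\U_2||_{\LL^\infty_T(\B^{2/q-1}_{q,1})\cap L^1_T(\B^{2/q+1}_{q,1})}$ and $\YY_T=||h||_{\LL^\infty_T(\B^{2/q}_{q,1})}$, and closes a bootstrap on $[0,2^{-n(2+4\ep)}]$ via Lemmas \ref{le3.1}--\ref{le3.3}, with $\ep=\frac{19}{30}(\frac14-\frac1p)$ tuned so that the inflation $2^{n(1-4/p-6\ep)}$ beats all error terms $2^{n(\frac2q-\frac2p-5\ep)}$. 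Nothing of this kind appears in your plan, and with your data the bootstrap is harder, not easier, since $h_0$ is already of critical size $\de$ at frequency $2^n$.

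A second concrete problem: your assertion that $h$ stays close to $h_0$ is false in the only sense that would make the computation routine. The continuity equation gives $h(t)-h_0\approx-\int_0^t\D e^{s\De}\uu_0\dd s$, whose amplitude at frequency $2^n$ is $\sim\beta_n2^{-n}=\alpha_n$ once $t\gtrsim2^{-2n}$, i.e.\ exactly the size of $h_0$ itself; hence $\na(h-h_0)\cd\na\uu$ contributes at the same order as your main term $\na h_0\cd\na\uu$. It happens to be in quadrature ($h-h_0\propto\sin(2^nx_1)$ against $\pa_1\uu\propto\sin(2^nx_1)$ produces a purely high-frequency product), so it plausibly does not destroy the low-frequency output, but this must be verified and your framework does not flag it; the paper sidesteps the issue entirely by taking $h_0=0$ and only ever needing upper bounds on $h$. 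Finally, your opening heuristic --- that the product law fails when $s_1+s_2=2(\frac2p-1)\le0$, i.e.\ for all $p\ge2$ --- does not single out the actual threshold $p=4$; the correct dichotomy comes from the exponent $1-\frac4p$ in the size of the second iterate, which you do recover at the end. In short: right mechanism in outline, but the existence and remainder control for $p>4$ is a genuine gap that cannot be filled by citing the well-posedness literature as proposed.
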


Our paper is organized as follows. In Section 2, we give some preliminaries which will be used in the sequel. In Section 3, we will give the proof of Thorem \ref{th1}.\\

\noindent\textbf{Notation.} In the following, since all spaces of functions are over $\mathbb{R}^2$, for simplicity, we drop $\mathbb{R}^2$ in our notations of function spaces if there is no ambiguity. Let $C\geq 1$ and $c\leq 1$ denote constants which can be different at different places. We use $A\lesssim B$ to denote $A\leq CB$.

\section{Littlewood-Paley analysis}

In this section, we first recall some tools from the Littlewood-Paley theory, the definition of homogeneous Besov spaces and some useful properties. Then, we state some applications in the linear transport equation and the heat equation.

First, let us introduce the Littlewood-Paley decomposition. Choose a radial function $\varphi\in \mathcal{S}(\mathbb{R}^2)$ supported in $\tilde{\mathcal{C}}=\{\xi\in\mathbb{R}^2,\frac34\leq \xi\leq \frac83\}$ such that
\begin{align*}
\sum_{j\in \mathbb{Z}}\varphi(2^{-j}\xi)=1 \quad \mathrm{for} \ \mathrm{all} \ \xi\neq0.
\end{align*}
The frequency localization operator $\dot{\Delta}_j$ and $\dot{S}_j$ are defined by
\begin{align*}
\dot{\Delta}_jf=\varphi(2^{-j}D)f=\mathcal{F}^{-1}(\varphi(2^{-j}\cdot)\mathcal{F}f), \quad \dot{S}_jf=\sum_{k\leq j-1}\dot{\Delta}_kf \quad \mathrm{for} \quad j\in\mathbb{Z}.
\end{align*}
With a suitable choice of $\varphi$, one can easily verify that
\begin{align*}
\dot{\Delta}_j\dot{\Delta}_kf=0 \quad \mathrm{if} \quad |j-k|\geq2, \quad \dot{\Delta}_j(\dot{S}_{k-1}f\dot{\Delta}_kf)=0 \quad  \mathrm{if} \quad  |j-k|\geq5.
\end{align*}
Next we recall Bony's decomposition from \cite{B.C.D}:
\begin{align*}
uv=\dot{T}_uv+\dot{T}_vu+\dot{R}(u,v),
\end{align*}
with
\begin{align*}
\dot{T}_uv=\sum_{j\in\mathbb{Z}}\dot{S}_{j-1}u\dot{\Delta}_jv, \quad \quad \dot{R}(u,v)=\sum_{j\in\mathbb{Z}}\dot{\Delta}_ju\widetilde{\Delta}_jv, \quad \quad \widetilde{\Delta}_jv=\sum_{|j'-j|\leq 1}\dot{\Delta}_{j'}v.
\end{align*}

The following Bernstein lemma will be stated as follows:
\begin{lemm} (\cite{B.C.D})\label{le1}
Let $1\leq p\leq q\leq \infty$  and $\mathcal{B}$ be a ball and $\mathcal{C}$ a ring of $\mathbb{R}^2$. Assume that $f\in L^p$, then for any $\alpha\in\mathbb{N}^2$, there exists a constant $C$ independent of $f$, $j$ such that
\begin{align*}
&&\mathrm{Supp} \,\hat{f}\subset\lambda \mathcal{B}\Rightarrow\sup_{|\alpha|=k}\|\partial^{\alpha}f\|_{L^q}\le C^{k+1}\lambda^{k+2(\frac1p-\frac1q)}\|f\|_{L^p},\\
&&\mathrm{Supp} \,\hat{f}\subset\lambda \mathcal{C}\Rightarrow C^{-k-1}\lambda^k\|f\|_{L^p}\le\sup_{|\alpha|=k}\|\partial^{\alpha}f\|_{L^p}
\le C^{k+1}\lambda^{k}\|f\|_{L^p}.
\end{align*}
\end{lemm}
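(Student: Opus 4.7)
The plan is to exhibit, for each prescribed $\de>0$, an initial pair $(h_0,\uu_0)$ whose critical norms are at most $\de$ but whose corresponding solution satisfies $\|\uu(t)\|_{\B^{\frac2p-1}_{p,1}}\ge 1/\de$ at some time $t\le \de$. Following Chen--Miao--Zhang \cite{C-M-Z4}, the idea is to concentrate the datum at a very high frequency $2^{n_0}$ while arranging for a quadratic self-interaction to produce a macroscopic low-frequency block $\DDe_0 \uu$. Because $\frac{2}{p}-1<0$ when $p>4$, the norm $\B^{\frac2p-1}_{p,1}$ puts a large weight on low frequencies, so even a modest low-frequency deposit in $\uu$ forces a large Besov norm.

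First I would take $\uu_0=0$ and set
\bbal
h_0(x)=A\,2^{-\frac{2n_0}p}\bigl[\cos(2^{n_0}\ee\cd x)+\cos((2^{n_0}+1)\ee\cd x)\bigr]\chi(x),
\end{align*}
where $\chi$ is a fixed Schwartz cutoff localising the construction (one may also sum a family of such blocks over a narrow window of indices to recover the $\ell^1$ summability built into $\B^{\frac2p}_{p,1}$). The prefactor $2^{-2n_0/p}$ is exactly the critical scaling of $\B^{\frac2p}_{p,1}$, so $\|h_0\|_{\B^{\frac2p}_{p,1}}\le\de$ for appropriate $A$. The essential feature of this choice is that $h_0$ carries two carrier frequencies separated by a single unit, so the bilinear combination $\na h_0\otimes\na h_0$ possesses a non-vanishing Fourier mass at frequencies of order unity, which lands precisely in $\DDe_0$.

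Next I would split $\uu=\uu_L+\uu_N$, where $\uu_L$ solves the linearised heat system with source $-\na h_L$ (with $h_L$ the transport--damped linear counterpart of $h$). Standard parabolic smoothing together with Lemma \ref{le1} yields $\|\uu_L(t)\|_{\B^{\frac2p-1}_{p,1}}\lesssim \de$ uniformly on $[0,\de]$. The leading contribution to $\uu_N$ is the second Picard iterate fed by the nonlinear terms $\na(\ln(1+h))\cd\na\uu$ and $\uu\cd\na\uu$ in \eqref{1.2}. A direct Fourier-level computation, using that the two carrier frequencies of $h_0$ differ by $1$, shows that these quadratic forcings deposit on $\DDe_0\uu$ an amount
\bbal
\|\DDe_0\uu_N(t)\|_{L^p}\gtrsim t^2\cd A^2\cd 2^{2n_0\left(1-\frac2p\right)}
\end{align*}
after short-time integration. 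Taking $n_0$ large and $t\sim\de$ makes this contribution dominate $1/\de$ while keeping $t\le\de$, since $1-\frac2p>\frac12$ when $p>4$ leaves enough growth in $n_0$ to overwhelm any polynomial loss.

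The main obstacle will be to justify this second-iterate calculation against the full nonlinear system, showing that no cancellation wipes out the low-frequency deposit and that the remaining pieces in $\uu_N$ stay negligible on the chosen time interval. This requires a bootstrap argument in a carefully designed working space that keeps the high-frequency part of $(h,\uu)$ close to its linear evolution while tracking $\DDe_0\uu$ separately. The delicate point of the paraproduct analysis is that Bony's remainder $\dot R(\na h,\na\uu)$ is responsible for the low-frequency block, and the precise balance that allows this remainder to produce genuine norm inflation --- rather than being absorbed by the paraproducts $\dot T_{\na h}\na\uu$ and $\dot T_{\na\uu}\na h$ --- is exactly what selects the threshold $p>4$, matching the borderline of the local well-posedness result of \cite{C-M-Z3}.
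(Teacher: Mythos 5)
Your proposal does not address the statement at hand. The statement you were asked to prove is Lemma \ref{le1}, the Bernstein inequalities: for $f\in L^p$ with $\mathrm{Supp}\,\hat f$ in a dilated ball one has the derivative-and-integrability gain $\sup_{|\alpha|=k}\|\partial^\alpha f\|_{L^q}\le C^{k+1}\lambda^{k+2(\frac1p-\frac1q)}\|f\|_{L^p}$, and for spectral support in a dilated annulus one additionally has the reverse bound $C^{-k-1}\lambda^k\|f\|_{L^p}\le \sup_{|\alpha|=k}\|\partial^\alpha f\|_{L^p}$. What you have written instead is a strategy for the main ill-posedness result, Theorem \ref{th1}: a choice of concentrated initial data, a Picard-iterate decomposition, and a low-frequency deposit argument. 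None of that bears on the Bernstein lemma, which is a purely linear, stationary statement in Fourier analysis with no reference to the shallow water system. (The paper itself does not prove this lemma either; it cites \cite{B.C.D}.)

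A correct proof would go as follows. For the ball case, pick $\psi\in\mathcal{D}(\R^2)$ equal to $1$ on a neighbourhood of $\mathcal{B}$, so that $f=\lambda^{2}\check\psi(\lambda\cdot)*f$ whenever $\mathrm{Supp}\,\hat f\subset\lambda\mathcal{B}$; then $\partial^\alpha f=\lambda^{2+k}(\partial^\alpha\check\psi)(\lambda\cdot)*f$ and Young's inequality with $\frac1q+1=\frac1r+\frac1p$ gives the claimed bound, the constant $C^{k+1}$ coming from $\|\partial^\alpha\check\psi\|_{L^r}$. The upper bound in the annulus case is the same argument with $\psi$ supported away from the origin and equal to $1$ on $\mathcal{C}$. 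For the reverse inequality one writes, on the annulus, $1=\sum_{|\alpha|=k}\frac{(-i\xi)^\alpha}{|\xi|^{2k}}\overline{(-i\xi)^\alpha}$ up to a smooth cutoff, i.e. $f=\sum_{|\alpha|=k}g_\alpha*\partial^\alpha f$ with $\hat g_\alpha(\xi)=(-i\xi)^\alpha|\xi|^{-2k}\tilde\psi(\xi/\lambda)$, and then $\|f\|_{L^p}\le\sum_{|\alpha|=k}\|g_\alpha\|_{L^1}\|\partial^\alpha f\|_{L^p}\le C^{k+1}\lambda^{-k}\sup_{|\alpha|=k}\|\partial^\alpha f\|_{L^p}$ by scaling. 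If you intended to prove Theorem \ref{th1}, note also that your construction inverts the paper's: the paper takes $h_0=0$ and concentrates $\uu_0$ at frequency $2^n$ with two opposite carrier frequencies $\pm 2^n\vec e$, extracting the low-frequency block from $\U_0\cdot\na\U_0$, whereas you take $\uu_0=0$ and drive the inflation through $h_0$; that route would have to pass through $\na h$ and the weaker coupling $\na\ln(1+h)\cdot\na\uu$, and you give no argument that this produces a nondegenerate low-frequency contribution of the required size.
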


We denote by $\mathcal{S}'_h(\R^2)$ the set of the tempered distribution $f$ satisfying $\lim\limits_{\lambda\rightarrow+\infty}||\chi(\lambda D)f||_{L^\infty}=0$ for some $\chi\in \mathcal{D}(\R^2)$ and $\chi(0)=0$. Let us introduce the homogeneous Besov spaces and Chemin-Lerner type Bseov spaces.

\begin{defi}
Let $s\in \mathbb{R}$, $1\leq p,r\leq \infty$. The homogeneous Besov space $\B^s_{p,r}$ is defined by
\begin{align*}
\B^s_{p,r}=\{f\in \mathcal{S}'_h(\mathbb{R}^2):||f||_{\B^s_{p,r}}<+\infty\},
\end{align*}
where
\begin{align*}
||f||_{\B^s_{p,r}}\triangleq \Big|\Big|(2^{ks}||\dot{\Delta}_k f||_{L^p(\R^2)})_{k}\Big|\Big|_{\ell^r}.
\end{align*}
\end{defi}

\begin{defi}
Let $s\in\mathbb{R}$, $1\leq p,q,r\leq\infty$ and $T\in(0,\infty]$. The Chemin-Lerner type Besov space $\tilde{L}^q_T(\B^s_{p,r})$ is defined as the set of all the distributions $f$ satisfying
\begin{align*}
||f||_{\tilde{L}^q_T(\B^s_{p,r})}\triangleq \Big|\Big|\big(2^{ks}||\dot{\Delta}_kf||_{L^q_T(L^p(\R^2))}\big)_k \Big|\Big|_{\ell^r}<+\infty.
\end{align*}
\end{defi}
By Minkowski's inequality, it is easy to find that
\begin{align*}
||f||_{\tilde{L}^q_T(\B^s_{p,r})}\leq ||f||_{L^q_T(\B^s_{p,r})} \quad \mathrm{if} \quad q\leq r, \quad \quad ||f||_{\tilde{L}^q_T(\B^s_{p,r})}\geq ||f||_{L^q_T(\B^s_{p,r})} \quad \mathrm{if} \quad q\geq r.
\end{align*}

Let us present the priori estimates of the linear transport equation
\begin{align}\label{3.1}
\partial_tf+v\cdot\nabla f=g,\quad f(0,x)=f_0,
\end{align}
and the heat equation
\begin{align}\label{3.2}
\partial_tu-\Delta u=G, \quad u(0,x)=u_0,
\end{align}
in homogenous Besov spaces. The following estimates will be frequently used in the sequel.
\begin{lemm}(\cite{D3})\label{le3.1}
Let $s\in(-2\min\{\frac1p,1-\frac{1}{p}\}-1,1+\frac 2p]$ and $1\leq p\leq \infty$. Let $v$ be a vector field such that $\nabla v\in L^1_T(\B^{\frac2p}_{p,1})$. Assume that $f_0\in\B^s_{p,1}$, $g\in L^1_T(\B^s_{p,1})$ and $f$ is the solution of \eqref{3.1}. Then there holds for $t\in[0,T]$,
\begin{align*}
||f||_{\tilde{L}^\infty_t(\B^s_{p,1})}\leq e^{CV(t)}(||f_0||_{\B^s_{p,1}}+\int^t_0e^{-CV(\tau)}||g(\tau)||_{\B^s_{p,1}}\mathrm{d}\tau),
\end{align*}
or
\begin{align*}
||f||_{\tilde{L}^\infty_t(\B^s_{p,1})}\leq e^{CV(t)}(||f_0||_{\B^s_{p,1}}+||g||_{\tilde{L}^1_t(\B^s_{p,1})}),
\end{align*}
where $V(t)=\int^t_0||\nabla v||_{\B^{\frac 2p}_{p,1}}\mathrm{d} \tau$.
\end{lemm}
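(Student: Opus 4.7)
The plan is to combine a dyadic $L^p$ energy estimate on the frequency-localized transport equation with a Bony-type commutator bound, then close the estimate by Gronwall's lemma. First, I would apply $\DDe_k$ to \eqref{3.1} to obtain
\[
\pa_t \DDe_k f + v\cd\na \DDe_k f = \DDe_k g + R_k, \qquad R_k:=[v\cd\na,\DDe_k]f.
\]
Multiplying by $|\DDe_k f|^{p-2}\DDe_k f$, integrating over $\R^2$, and using the identity $\int (v\cd\na)F\cd|F|^{p-2}F\dd x = -\fr{1}{p}\int|F|^p\,\D v\dd x$ for $F=\DDe_k f$, one obtains, after time integration,
\[
\|\DDe_k f(t)\|_{L^p} \le \|\DDe_k f_0\|_{L^p} + \int_0^t \|\DDe_k g\|_{L^p}\dd\tau + \int_0^t\|R_k\|_{L^p}\dd\tau + \fr{1}{p}\int_0^t\|\D v\|_{L^\infty}\|\DDe_k f\|_{L^p}\dd\tau.
\]

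Next, the core step is establishing the commutator estimate
\[
\sum_{k\in\Z} 2^{ks}\|R_k\|_{L^p} \lesssim \|\na v\|_{\B^{2/p}_{p,1}}\|f\|_{\B^s_{p,1}}.
\]
I would prove this by decomposing $v\cd\na f$ via Bony's paradifferential calculus into $\dot T_{v^i}\pa_i f$, $\dot T_{\pa_i f}v^i$, and $\dot R(v^i,\pa_i f)$, and analyzing the three resulting contributions to $R_k$ separately. For the dominant paraproduct $\dot T_{v^i}\pa_i f$, I would expand the kernel of $\DDe_k$ by Taylor's formula, gaining a factor $2^{-k}$ that exactly cancels the derivative loss. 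The two remaining pieces are handled via support localization combined with the embedding $\B^{2/p}_{p,1}\hookrightarrow L^\infty$ and the standard continuity of paraproducts and remainders on Besov scales. The admissible range $s\in(-2\min\{1/p,1-1/p\}-1,\,1+2/p]$ is dictated by summability of the dyadic series: the upper endpoint is forced by the highest-low paraproduct $\dot T_{\pa_i f}v^i$, while the negative part of the lower bound arises from duality with the conjugate index $p'$ applied to the remainder term.

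Finally, multiplying the pointwise inequality by $2^{ks}$, passing to $\sup_{\tau\in[0,t]}$, summing over $k\in\Z$, and swapping sum and time integral via Fubini converts the $R_k$ sum into $C\int_0^t \|\na v(\tau)\|_{\B^{2/p}_{p,1}}\|f\|_{\LL^\infty_\tau(\B^s_{p,1})}\dd\tau$. This yields the closed integral inequality
\[
\|f\|_{\LL^\infty_t(\B^s_{p,1})} \le \|f_0\|_{\B^s_{p,1}} + \|g\|_{\LL^1_t(\B^s_{p,1})} + C\int_0^t V'(\tau)\|f\|_{\LL^\infty_\tau(\B^s_{p,1})}\dd\tau,
\]
with $V'(\tau)=\|\na v(\tau)\|_{\B^{2/p}_{p,1}}$. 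Gronwall's inequality then delivers the second stated bound, and the first (finer) form follows by keeping the factor $e^{-CV(\tau)}$ inside the $g$-integral during the Gronwall step. The main obstacle I anticipate is the commutator estimate at the extreme values of $s$: at the upper endpoint $s=1+2/p$ one needs a sharp Bernstein-type bound on $\DDe_k f$, while for $s\le 0$ the duality reformulation of the remainder term is exactly what pins down the sharp lower threshold $s>-2\min\{1/p,1-1/p\}-1$.
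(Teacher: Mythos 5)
The paper does not actually prove this lemma: it is quoted verbatim from Danchin \cite{D3} (see also Bahouri--Chemin--Danchin \cite{B.C.D}), so there is no in-paper argument to compare against. Your outline --- dyadic localization, the $L^p$ energy estimate absorbing the $\D v$ term, the commutator bound $\sum_{k}2^{ks}\|[v\cd\na,\DDe_k]f\|_{L^p}\lesssim \|\na v\|_{\B^{2/p}_{p,1}}\|f\|_{\B^s_{p,1}}$ on the stated range of $s$, and Gronwall (keeping $e^{-CV(\tau)}$ inside the source integral for the finer form) --- is precisely the standard proof in those references, and your attribution of the upper endpoint $s=1+\frac2p$ to the paraproduct $\dot T_{\pa_i f}v^i$ and of the lower threshold to the remainder term is also correct. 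The one point to flag is that the multiplier $|\DDe_k f|^{p-2}\DDe_k f$ argument as written requires $1<p<\infty$; the endpoint cases $p\in\{1,\infty\}$ (the lemma allows $1\le p\le\infty$) are recovered either by transporting $\DDe_k f$ along the flow of $v$ and estimating the Jacobian, or by a limiting argument in $p$, a step worth stating explicitly.
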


\begin{lemm}(\cite{D3})\label{le3.2}
Let $s\in \mathbb{R}$ and $1\leq q,q_1,p,r\leq \infty$ with $q_1\leq q$. Assume that $u_0\in \B^s_{p,r}$ and $G\in \tilde{L}^{q_1}_T(\B^{s-2+\frac{2}{q_1}}_{p,r})$. Then \eqref{3.2} has a unique solution $u\in \tilde{L}^{q}_T(\B^{s+\frac2q}_{p,r})$ satisfying
\begin{align*}
||u||_{\tilde{L}^{q}_T(\B^{s+\frac2q}_{p,r})}\leq C(||u_0||_{\B^s_{p,r}}+||G||_{\tilde{L}^{q_1}_T(\B^{s-2+\frac{2}{q_1}}_{p,r})}).
\end{align*}
\end{lemm}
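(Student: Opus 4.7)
The plan is to project the equation \eqref{3.2} onto dyadic frequency shells with $\DDe_j$, solve block by block via Duhamel's formula, exploit the exponential decay of the heat semigroup on spectrally localized data, and finally reassemble the pieces. The Chemin--Lerner norm $||\cdot||_{\LL^q_T(\B^{s+2/q}_{p,r})}$ is tailor-made for exactly this strategy, because the $L^q_T L^p_x$ quantity sits \emph{inside} the $\ell^r_j$ summation, so a blockwise time-$L^q$ bound followed by a single $\ell^r$ sum delivers the full estimate with no loss.

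Concretely, writing $u_j:=\DDe_j u$ and $G_j:=\DDe_j G$, Duhamel gives
$$u_j(t)=e^{t\De}\DDe_j u_0+\int_0^t e^{(t-\tau)\De}G_j(\tau)\dd \tau.$$
Since $u_j$ is Fourier-supported in the ring $2^j\widetilde{\mathcal{C}}$, Bernstein's lemma (Lemma \ref{le1}) yields the sharp spectrally-localized decay $||e^{t\De} u_j||_{L^p}\lesssim e^{-c 2^{2j}t}||u_j||_{L^p}$, uniformly in $t\ge 0$. Taking $L^q_T$ in time of the homogeneous term produces $C 2^{-2j/q}||\DDe_j u_0||_{L^p}$ (up to a harmless adjustment when $2^{2j}T\lesssim 1$).

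For the Duhamel term I apply Young's convolution inequality on $[0,T]$:
$$\Big|\Big|\int_0^t e^{-c 2^{2j}(t-\tau)}||G_j(\tau)||_{L^p}\dd\tau\Big|\Big|_{L^q_T}\le ||e^{-c 2^{2j}\cdot}||_{L^\alpha_T}\,||G_j||_{L^{q_1}_T(L^p)},\qquad 1+\tfrac1q=\tfrac1{q_1}+\tfrac1\alpha,$$
which is legitimate precisely when $\alpha\ge 1$, i.e.\ when $q_1\le q$ -- this is exactly where the hypothesis enters. The kernel has $L^\alpha_T$ norm bounded by $C 2^{-2j/\alpha}$, and a direct exponent count gives $2/\alpha=2-2/q_1+2/q$. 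Multiplying through by $2^{j(s+2/q)}$ and then taking $\ell^r_j$ collapses the two contributions to the two terms on the right of the claimed inequality:
$$||u||_{\LL^q_T(\B^{s+2/q}_{p,r})}\le C\Big(||u_0||_{\B^s_{p,r}}+||G||_{\LL^{q_1}_T(\B^{s-2+2/q_1}_{p,r})}\Big).$$

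Uniqueness is immediate from linearity: the difference of two solutions with the same data satisfies the same estimate with zero right-hand side and therefore vanishes. The only genuinely delicate point is the exponent bookkeeping in Young's inequality, together with the endpoint cases $q=\infty$ or $q_1=1$; beyond that, the argument is the standard Duhamel-plus-Bernstein scheme for parabolic smoothing in homogeneous Besov spaces, reproducing the classical estimate from \cite{D3}.
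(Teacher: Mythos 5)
The paper does not prove this lemma at all---it is quoted verbatim from \cite{D3}---so there is no internal proof to compare against; your argument is the standard one underlying that reference, and the exponent bookkeeping is correct: Young's inequality with $1+\tfrac1q=\tfrac1{q_1}+\tfrac1\alpha$ forces $\alpha\ge1$ exactly when $q_1\le q$, and $2^{j(s+2/q)}\cdot 2^{-2j/\alpha}=2^{j(s-2+2/q_1)}\cdot 2^{2j/q}\cdot 2^{-2j/q}$ reproduces both terms on the right-hand side, with the $\ell^r_j$ sum taken last as the Chemin--Lerner norm requires. The one imprecision is your attribution of the decay $\|e^{t\De}\DDe_ju\|_{L^p}\lesssim e^{-ct2^{2j}}\|\DDe_ju\|_{L^p}$ to Lemma \ref{le1}: the Bernstein inequalities as stated there control derivatives, not the heat semigroup, and this bound is a separate (equally standard) smoothing lemma proved by writing $e^{t\De}\DDe_ju=\tilde\phi(2^{-j}D)e^{t\De}\DDe_ju$ for a fattened annulus cutoff $\tilde\phi$ and estimating the $L^1$ norm of the kernel of $\tilde\phi(2^{-j}D)e^{t\De}$ by integration by parts; it should be invoked as such rather than derived from Lemma \ref{le1}. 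With that citation corrected, the proof is complete.
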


Finally, we need the following estimates for the product estimates in the next section.

\begin{lemm}{\cite{B.C.D}}\label{le11}
Let $T,s>0$ and $1\leq ,\rho\leq \infty$. Then it holds that
\bbal
||fg||_{\LL^\rho_T(\B^s_{p,1})}\leq C\big(||g||_{L^\infty_T(L^\infty)}||f||_{\LL^\rho_T(\B^s_{p,1})}+||f||_{L^\infty_T(L^\infty)}||g||_{\LL^\rho_T(\B^s_{p,1})}\big).
\end{align*}
\end{lemm}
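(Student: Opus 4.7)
The plan is to transplant the norm-inflation construction of Chen--Miao--Zhang \cite{C-M-Z4} from 3D compressible Navier--Stokes to the 2D shallow-water system \eqref{1.2}. The underlying mechanism is this: for $p>4$ the critical exponent $s=\frac{2}{p}-1$ lies strictly below $-\frac{1}{2}$, so the weight $2^{ks}$ in $\|\cdot\|_{\B^{\frac{2}{p}-1}_{p,1}}$ strongly damps high-frequency blocks and leaves low-frequency ones essentially unweighted. One can therefore pack a very large physical amplitude into a small Besov norm by placing the initial velocity at a single very high dyadic scale $2^{n_0}$; once a quadratic beat in the nonlinearity redistributes that amplitude to a zero-frequency mode, the new mode produces an enormous contribution to the norm, while the initial data still has size only $\delta$.

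Concretely, I would set $h_0\equiv 0$ and
\[
\mathbf{u}_0(x)=\delta\,2^{n_0(1-\frac{2}{p})}\,\phi(x)\bigl(\cos(2^{n_0}x_1)\,\mathbf{e}_1+\sin(2^{n_0}x_1)\,\mathbf{e}_2\bigr),
\]
where $\phi\in\mathcal{S}(\R^2)$ is a fixed bump and $n_0=n_0(\delta)$ is a free large integer. A direct Littlewood--Paley computation gives $\|\mathbf{u}_0\|_{\B^{\frac{2}{p}-1}_{p,1}}\lesssim\delta$. Since the data is Schwartz, classical high-regularity Sobolev theory produces a unique smooth solution $(h,\mathbf{u})$ on some interval $[0,T^*)$. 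I would then split $\mathbf{u}=\mathbf{u}_L+\mathbf{u}_{NL}$ with $\mathbf{u}_L=e^{t\De}\mathbf{u}_0$, approximate $h(\tau)$ by $h_L(\tau):=-\int_0^{\tau}\D\mathbf{u}_L(s)\,ds$ (valid to leading order as long as $\tau\ll 2^{-2n_0}$), and apply Duhamel to the momentum equation for $\mathbf{u}_{NL}$.

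The decisive step is extracting the dominant zero-frequency resonance. Both the self-interaction $\mathbf{u}_L\cd\na\mathbf{u}_L$ and, through $h\approx h_L$, the coupling term $\na\ln(1+h)\cd\na\mathbf{u}$ contain via the identities $\cos^2=\frac{1}{2}(1+\cos(2\cd\,))$ and $\sin^2=\frac{1}{2}(1-\cos(2\cd\,))$ a piece spectrally supported in a fixed ball around the origin, whose $L^p$ mass is of order $\delta^2\,2^{n_0\alpha_p}$ for an exponent $\alpha_p>0$ that is strictly positive precisely when $p>4$. Time-integrating this source against $e^{(t-\tau)\De}$, which is essentially the identity on that low-frequency piece for bounded time, yields a low-frequency bump in $\mathbf{u}_{NL}(t)$ contributing $\gtrsim C(t)\,\delta^2\,2^{n_0\alpha_p}$ to $\|\mathbf{u}(t)\|_{\B^{\frac{2}{p}-1}_{p,1}}$. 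Choosing $t\in(0,\delta)$ and then $n_0$ so large that $C(t)\,\delta^3\,2^{n_0\alpha_p}>1$ gives the required blow-up $\|\mathbf{u}(t)\|_{\B^{\frac{2}{p}-1}_{p,1}}\ge 1/\delta$.

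The main obstacle will be rigorously dominating all secondary terms. I would run a Chemin--Lerner bootstrap for the perturbation $(h,\mathbf{u}_{NL})$ in $\LL^\infty_t(\B^{\frac{2}{p}}_{p,1})\times\bigl(\LL^\infty_t(\B^{\frac{2}{p}-1}_{p,1})\cap\LL^1_t(\B^{\frac{2}{p}+1}_{p,1})\bigr)$, using Lemmas \ref{le3.1}, \ref{le3.2} and \ref{le11} to show that every interaction except the identified leading beat is of strictly lower order in $n_0$. The delicate points are (i) showing that the existence time $T^*$ of the classical solution exceeds $t$ for our chosen parameters, (ii) justifying the linearization $h\approx h_L$ inside the nonlinearity without circular dependence on a priori bounds for $\mathbf{u}_{NL}$, and (iii) checking that the zero-frequency contributions coming from $\mathbf{u}\cd\na\mathbf{u}$ and from $\na\ln(1+h)\cd\na\mathbf{u}$ do not accidentally cancel; the inclusion of both $\cos$ and $\sin$ polarizations in $\mathbf{u}_0$ is designed precisely to rule this out and to maximize the resonant amplitude $\alpha_p$.
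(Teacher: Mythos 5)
Your proposal does not address the statement at hand. The statement is Lemma \ref{le11}, the tame product estimate
\begin{align*}
\|fg\|_{\LL^\rho_T(\B^s_{p,1})}\leq C\big(\|g\|_{L^\infty_T(L^\infty)}\|f\|_{\LL^\rho_T(\B^s_{p,1})}+\|f\|_{L^\infty_T(L^\infty)}\|g\|_{\LL^\rho_T(\B^s_{p,1})}\big),\qquad s>0,
\end{align*}
which is a purely harmonic-analytic fact about products in Chemin--Lerner spaces. What you have written is instead a sketch of the norm-inflation construction for the main ill-posedness result (Theorem \ref{th1}): choice of oscillatory initial data, extraction of a zero-frequency resonance from the quadratic nonlinearity, and a bootstrap for the remainder. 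None of that bears on the product estimate, so as a proof of Lemma \ref{le11} the submission is vacuous and must be redone.

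For the record, the paper itself gives no proof of Lemma \ref{le11}; it is quoted from Bahouri--Chemin--Danchin \cite{B.C.D}. A self-contained argument runs through Bony's decomposition $fg=\dot{T}_fg+\dot{T}_gf+\dot{R}(f,g)$. For the paraproduct $\dot{T}_fg$ one uses $\|\dot{S}_{k-1}f\,\dot{\Delta}_kg\|_{L^p}\le\|f\|_{L^\infty}\|\dot{\Delta}_kg\|_{L^p}$ together with the quasi-orthogonality $\dot{\Delta}_j(\dot{S}_{k-1}f\dot{\Delta}_kf)=0$ for $|j-k|\geq5$, then H\"older in time to place the $L^\infty$ factor in $L^\infty_T$ and the dyadic block in $L^\rho_T$; the term $\dot{T}_gf$ is symmetric. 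For the remainder, only indices $k\ge j-N_0$ contribute to $\dot{\Delta}_j\dot{R}(f,g)$, and
\begin{align*}
2^{js}\|\dot{\Delta}_j\dot{R}(f,g)\|_{L^\rho_T(L^p)}\le \sum_{k\ge j-N_0}2^{(j-k)s}\,2^{ks}\|\dot{\Delta}_kf\|_{L^\rho_T(L^p)}\|g\|_{L^\infty_T(L^\infty)},
\end{align*}
so the hypothesis $s>0$ is exactly what makes $\sum_{k\ge j-N_0}2^{(j-k)s}$ summable and allows Young's inequality in $\ell^1$ to close the estimate. This is the argument you should have produced.
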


\begin{lemm}{\cite{B.C.D,C-M-Z4}}\label{le12}
Let $T,s>0$ and $1\leq ,\rho\leq \infty$. Assume that $F\in W_{loc}^{[\sigma]+3}(\R)$ with $F(0)=0$. Then for any $f\in L^\infty\cap \B^s_{p,1}$, we have
\bbal
||F(f)||_{\LL^\rho_T(\B^s_{p,1})}\leq C\big(1+||f||_{L^\infty_T(L^\infty)}\big)^{[\sigma]+2}||f||_{\LL^\rho_T(\B^s_{p,1})}.
\end{align*}
\end{lemm}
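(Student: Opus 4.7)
The plan is to adapt the classical homogeneous-Besov composition estimate of Bahouri--Chemin--Danchin to the Chemin--Lerner setting. Since $\LL^\rho_T(\B^s_{p,1})$ differs from $L^\rho_T(\B^s_{p,1})$ only in the order of the $L^\rho_T$ operation and the dyadic $\ell^1_k$ sum, the strategy is to derive, for each block $\dot\Delta_k F(f)(t)$, a spatial estimate in which the time dependence is confined to factors of the form $\|\dot\Delta_j f\|_{L^\rho_T L^p}$ while the multiplicative prefactor involves only $\|f\|_{L^\infty_T L^\infty}$.

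First I would invoke Meyer's telescoping identity, valid because $F(0)=0$ and $\dot S_j f\to 0$ in $\mathcal S'_h$ as $j\to-\infty$:
\bbal
F(f)=\sum_{j\in\Z}\bigl[F(\dot S_{j+1}f)-F(\dot S_j f)\bigr]=\sum_{j\in\Z}m_j\,\dot\Delta_j f,\qquad m_j:=\int_0^1 F'\bigl(\dot S_j f+\tau\dot\Delta_j f\bigr)\dd\tau.
\end{align*}
Applying $\dot\Delta_k$ and splitting the $j$-sum into a high-frequency regime $j\ge k-N_0$ and a low-frequency regime $j<k-N_0$, the high-$j$ contributions are controlled by the trivial bound $\|m_j\|_{L^\infty_T L^\infty}\le\|F'\|_{L^\infty(I)}$ with $I=[-\|f\|_{L^\infty_T L^\infty},\|f\|_{L^\infty_T L^\infty}]$. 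In the low-$j$ regime the output frequency must originate from $m_j$, so testing against the kernel of $\dot\Delta_k$ and integrating by parts $N$ times yields a factor $2^{-kN}\|\nabla^N m_j\|_{L^\infty_T L^\infty}$; Fa\`a di Bruno combined with Bernstein (Lemma \ref{le1}) then gives $\|\nabla^N m_j\|_{L^\infty_T L^\infty}\lesssim 2^{jN}\bigl(1+\|f\|_{L^\infty_T L^\infty}\bigr)^N\|F\|_{W^{N+1,\infty}(I)}$.

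Choosing $N=[\sigma]+2$, which is exactly what the hypothesis $F\in W^{[\sigma]+3}_{loc}$ permits, and combining the two regimes produces the pointwise-in-time block estimate
\bbal
2^{ks}\|\dot\Delta_k F(f)(t)\|_{L^p}\lesssim \bigl(1+\|f\|_{L^\infty_T L^\infty}\bigr)^{[\sigma]+2}\sum_{j\in\Z} c_{k-j}\,2^{js}\|\dot\Delta_j f(t)\|_{L^p},
\end{align*}
with $(c_m)\in\ell^1(\Z)$. Taking $L^\rho_T$-norm in time and then summing in $\ell^1_k$ via Young's convolution inequality delivers the announced bound. The main obstacle is the Fa\`a di Bruno bookkeeping in the low-$j$ regime: tracking that $N=[\sigma]+2$ spatial derivatives on $m_j$ produce derivatives of $F$ of order up to $[\sigma]+3$ (accounting for the regularity hypothesis) together with up to $[\sigma]+2$ factors $\nabla(\dot S_j f+\tau\dot\Delta_j f)$, each of which Bernstein controls in $L^\infty$ by $2^j\|f\|_{L^\infty_T L^\infty}$; this produces precisely the announced power $[\sigma]+2$ of $(1+\|f\|_{L^\infty_T L^\infty})$ and makes the $2^{-kN}2^{jN}$ ratios summable in $k-j$ after combination with the $2^{(j-k)s}$ coming from the Besov weights.
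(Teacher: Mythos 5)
The paper does not prove this lemma; it is quoted directly from the cited references, where the proof is exactly the Meyer first-linearization argument you give (telescoping $F(f)=\sum_j m_j\dot\Delta_j f$, splitting into $j\gtrsim k$ and $j\ll k$, and gaining $2^{(j-k)N}$ in the low-frequency regime via $N=[\sigma]+2$ derivatives on the product). Your adaptation to the Chemin--Lerner norm --- keeping the time dependence only in $\|\dot\Delta_j f\|_{L^\rho_T(L^p)}$ and in the time-uniform prefactor $\|f\|_{L^\infty_T(L^\infty)}$ before summing in $k$ by Young's inequality --- is correct and is essentially the argument of the cited sources, up to the minor point that the $2^{-kN}$ gain should be stated for the full product $m_j\dot\Delta_j f$ (reverse Bernstein plus Leibniz) rather than for $m_j$ alone, which you in fact account for in your final bookkeeping.
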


\begin{lemm}\label{le3.3}
Let $1\leq \rho,\rho_1,\rho_2\leq \infty$ with $\frac1\rho=\frac1\rho_1+\frac1\rho_2$ and $2\leq p\leq 4\leq q < \infty$ with $\frac2p+\frac 2q>1$. Then, we have
\bbal
||fg||_{\LL^\rho_T(\B^{\frac 2p-1}_{p,1})}\lesssim ||f||_{\LL^{\rho_1}_T(\B^{\frac 2p-1}_{p,1})}||g||_{\LL^{\rho_2}_T(\B^{\frac 2q}_{q,1})}.
\end{align*}
\end{lemm}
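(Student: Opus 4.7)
The plan is to apply Bony's paraproduct decomposition
\[
fg = \dot T_f g + \dot T_g f + \dot R(f,g),
\]
recalled at the start of Section~2, and to estimate the three pieces separately in $\tilde L^\rho_T(\dot B^{2/p-1}_{p,1})$. In every piece, H\"older's inequality in time with $\tfrac{1}{\rho}=\tfrac{1}{\rho_1}+\tfrac{1}{\rho_2}$ cleanly separates the $f$- and $g$-factors at the level of each dyadic block, so I would argue frequency by frequency as if all the time indices were $\infty$, and only reinstate the Lebesgue-in-time norms at the end when taking the final $\ell^1$ sum in frequency.

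The paraproduct $\dot T_g f = \sum_j \dot S_{j-1} g\,\dot\Delta_j f$ is the easy piece: since $q<\infty$, Bernstein yields the embedding $\dot B^{2/q}_{q,1}\hookrightarrow L^\infty$, so $\|\dot S_{j-1}g\|_{L^\infty}\lesssim\|g\|_{\dot B^{2/q}_{q,1}}$ uniformly in $j$; combined with the almost-orthogonality relation $\dot\Delta_k(\dot S_{j-1}g\,\dot\Delta_j f)\neq 0\Rightarrow|j-k|\leq 4$, this delivers the bound immediately. The twin paraproduct $\dot T_f g = \sum_j \dot S_{j-1} f\,\dot\Delta_j g$ is more delicate because $f$ sits at the negative regularity $\frac{2}{p}-1\leq 0$; I would introduce $r$ via $\tfrac{1}{r}=\tfrac{1}{p}-\tfrac{1}{q}$ (so $r\geq p$), apply H\"older $L^p\hookleftarrow L^r\cdot L^q$, and control $\|\dot S_{j-1}f\|_{L^r}$ by $\sum_{l\leq j-2}2^{2l/q}\|\dot\Delta_l f\|_{L^p}$ via Bernstein. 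Factoring out $2^{l(2/p-1)}$ produces a shifted sum with exponent $\beta=\tfrac{2}{q}-\tfrac{2}{p}+1>0$ (using $p\geq 2$), which is a discrete convolution with an $\ell^1$ kernel, and the remaining exponents combine so that the $2^{k(2/p-1)}$ prefactor absorbs everything.

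The main obstacle, and the step where the hypothesis $\tfrac{2}{p}+\tfrac{2}{q}>1$ is genuinely needed, is the remainder $\dot R(f,g)=\sum_j\dot\Delta_j f\,\widetilde\Delta_j g$. Each summand has spectral support at scale $\lesssim 2^j$, so $\dot\Delta_k\dot R(f,g)$ only sees indices $j\geq k-3$. I would apply H\"older $L^{\tilde r}\subset L^p\cdot L^q$ with $\tfrac{1}{\tilde r}=\tfrac{1}{p}+\tfrac{1}{q}\leq 1$ and then Bernstein to raise $L^{\tilde r}$ back to $L^p$ at the cost of $2^{2k/q}$, leaving
\[
2^{k(2/p-1)}\|\dot\Delta_k\dot R(f,g)\|_{L^p}\;\lesssim\;\sum_{j\geq k-3} 2^{(k-j)\gamma}\,c_j\tilde d_j,\qquad \gamma:=\tfrac{2}{p}+\tfrac{2}{q}-1,
\]
with $c_j=2^{j(2/p-1)}\|\dot\Delta_j f\|_{L^p}$ and $\tilde d_j\sim 2^{2j/q}\|\widetilde\Delta_j g\|_{L^q}$. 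Precisely here the hypothesis enters: $\gamma>0$ makes the geometric tail $\sum_{m\leq 3}2^{m\gamma}$ summable, and Fubini together with $\sum_j c_j\tilde d_j\leq\|c\|_{\ell^1}\|\tilde d\|_{\ell^1}$ closes the estimate. Reinstating H\"older in time everywhere delivers the claim in Chemin--Lerner norms; note that if $\gamma\leq 0$ the $j$-sum above diverges, so the strict inequality $\tfrac{2}{p}+\tfrac{2}{q}>1$ is essential rather than merely technical.
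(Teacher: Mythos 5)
Your proposal is correct and follows essentially the same route as the paper's proof: Bony's decomposition with H\"older in time at the level of dyadic blocks, the $L^{pq/(q-p)}$--$L^q$ H\"older/Bernstein pairing for $\dot T_f g$, the embedding $\B^{2/q}_{q,1}\hookrightarrow L^\infty$ for $\dot T_g f$, and the remainder controlled by the geometric sum with exponent $\frac2p+\frac2q-1>0$, which is exactly where the paper also uses that hypothesis.
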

\begin{proof}
Let $\frac1q+\frac{1}{p}=\frac1r$. According to H\"{o}lder's inequallity and Lemma \ref{le1}, we deduce that
\bbal
||\dot{T}_fg||_{\LL^\rho_T(\B^{\frac 2p-1}_{p,1})}&\leq \sum_{k\in \Z} 2^{k(\frac2p-1)}||\dot{S}_{k-1}f||_{L^{\rho_1}_T(L^{\frac{pq}{q-p}})}||\dot{\De}_kg||_{L^{\rho_2}_T(L^q)}
\\&\leq \sum_{k\in \Z} \sum_{k'\leq k-1}2^{k'(\frac2p-1)}||\dot{\De}_{k'}f||_{L^{\rho_1}_T(L^{p})}\cdot2^{\frac{2k}{q}}||\dot{\De}_kg||_{L^{\rho_2}_T(L^q)}\cdot 2^{(k-k')(\frac2p-\frac2q-1)}
\\&\leq C||f||_{\LL^{\rho_1}_T(\B^{\frac 2p-1}_{p,1})}||g||_{\LL^{\rho_2}_T(\B^{\frac 2q}_{q,1})},
\end{align*}
\bbal
||\dot{T}_gf||_{\LL^\rho_T(\B^{\frac 2p-1}_{p,1})}&\leq \sum_{k} 2^{k(\frac2p-1)}||\dot{S}_{k-1}g||_{L^{\rho_1}_T(L^{\infty})}||\DDe_kg||_{L^{\rho_2}_T(L^p)}
\\&\leq \sum_{k}2^{k'(\frac2p-1)}||\DDe_{k}f||_{L^{\rho_1}_T(L^{p})}\cdot \sum_{k'}||\DDe_{k'}g||_{L^{\rho_2}_T(L^\infty)}
\\&\leq C||f||_{\LL^{\rho_1}_T(\B^{\frac 2p-1}_{p,1})}||g||_{\LL^{\rho_2}_T(\B^{\frac 2q}_{q,1})},
\end{align*}
\bbal
||\dot{R}(f,g)||_{\LL^\rho_T(\B^{\frac 2p-1}_{p,1})}&\leq \sum_k\sum_{k'\geq k-3}2^{k(\frac2p+\frac2q-1)}||\DDe_{k'}f||_{L^{\rho_1}_T(L^{p})}||\DDe_{k'}g||_{L^{\rho_2}_T(L^q)}
\\&\leq \sum_k\sum_{k'\geq k-3}2^{k'(\frac2p-1)}||\DDe_{k'}f||_{L^{\rho_1}_T(L^{p})}\cdot 2^{\frac{2k'}{q}}||\DDe_{k'}g||_{L^{\rho_2}_T(L^q)}\cdot 2^{(k-k')(\frac2p+\frac2q-1)}
\\&\leq C||f||_{\LL^{\rho_1}_T(\B^{\frac 2p-1}_{p,1})}||g||_{\LL^{\rho_2}_T(\B^{\frac 2q}_{q,1})}.
\end{align*}
This completes the proof of this lemma.
\end{proof}

\section{Proof of the main theorem}

In this section, we will give the details for the proof of the main theorem.

Let $p>4$. For simplicity, we define the following index, that is
\bbal
\frac{2}{p*}+\frac 2p=1, \quad \frac {4}{q}=\frac{2}{p*}+\frac12, \quad \frac{2}{q*}+\frac 2q=1, \quad \frac{4}{r}=\frac{2}{q*}+\frac12.
\end{align*}
Then, we have
\bal\label{l-z}\begin{split}
&2\leq p*<q<4<r<q*<p, \quad \frac 2r-\frac 2p=\frac{1}{2}(\frac 2q-\frac 2p), \quad \frac34(1-\frac4p)=\frac2q-\frac2p,
\\& \frac 2q+\frac 2r>1.
\end{split}\end{align}
Define a smooth function $\phi$ with values in $[0,1]$ which satisfies
\bbal
\phi(\xi)=
\bca
1, \quad \mathrm{if} \ |\xi|\leq \frac14,\\
0, \quad \mathrm{if} \ |\xi|\geq \frac12.
\eca
\end{align*}
Motivated by  \cite{B-P,C-M-Z4}, we introduce the initial data $(h_0,\uu_0)$ as
\bbal
h_0=0,\quad \hat{\uu}_0=2^{n(1-\frac 2p-\ep)}\big(\phi(\xi-2^n\vec{e})+\phi(\xi+2^n\vec{e}),i\phi(\xi-2^n\vec{e})-i\phi(\xi+2^n\vec{e})\big),
\end{align*}
where $\vec{e}=(1,1)$ and $\ep$ satisfying $\ep=\frac{19}{30}(\frac14-\frac 1p))>0$.  Here, we also have
\bbal
6\ep<1-\frac 4p, \quad 5\ep>\frac2q-\frac2p=\frac34(1-\frac 4p).
\end{align*}
Notice that
\bbal
\uu_0&=2^{n(1-\frac 2p-\ep)}\big(e^{i2^nx\cd \vec{e}}+e^{-i2^nx\cd \vec{e}},ie^{i2^nx\cd \vec{e}}-ie^{-i2^nx\cd \vec{e}}\big)\check{\phi}(x)
\\&=2^{n(1-\frac 2p-\ep)+1}\big(\cos(2^nx\cd \vec{e}),-\sin(2^nx\cd \vec{e})\big)\check{\phi}(x).
\end{align*}
Obviously, the initial data $\uu_0$ is a real-valued field. Now we decompose the solution $\uu(t)$ into $\uu(t)=\U_0(t)+\U_1(t)+\U_2(t)$ satisfying
\bbal
\U_0(t)=e^{t\De}\uu_0, \quad \quad \U_1=-\int^t_0e^{(t-\tau)\De}(\U_0\cd \na \U_0)\dd \tau,
\end{align*}
and
\bbal
\pa_t\U_2-\De \U_2&=\mathbf{F}_1+\na \ln(1+h)\cd\na(\U_0+\U_1+\U_2)-\na h,
\end{align*}
where
\bbal
\mathbf{F}_1&=-(\U_0\cd \na \U_1+\U_0\cd \na \U_2+\U_1\cd \na \U_0+\U_1\cd \na \U_1
\\&\quad +\U_1\cd \na \U_2+\U_2\cd \na \U_0+\U_2\cd \na \U_1+\U_2\cd \na \U_2).
\end{align*}
It is easy to show that for any $s\geq 0$ and $q_0\geq 2$,
\bal\label{zhu-0}
||\U_0||_{\LL^\infty_T(\B^{\frac {2}{q_0}-1+s}_{q_0,1})\cap L^1_T(\B^{\frac {2}{q_0}+1+s}_{q_0,1})}\leq C||\uu_0||_{\B^{\frac {2}{q_0}-1+s}_{q_0,1}}\leq C2^{n(\frac {2}{q_0}-\frac 2p+s-\ep)}.
\end{align}
Now, we establish the lower bound estimate of $||\U_1||_{\B^{\frac 2p-1}_{p,1}}$. Since $\B^{\frac 2p-1}_{p,1}\hookrightarrow \B^{-1}_{\infty,\infty}$, we have $||\U_1||_{\B^{\frac 2p-1}_{p,1}}\geq c|\int_{\R^2}\varphi(16\xi)\hat{\U}_1(\xi)\dd \xi|$ for some $c>0$ independent of $n$. Note that
\bbal
&\big(\U_0\cd \na \U_0\big)^1=\U^1_0\pa_1\U^1_0+\U^2_0\pa_2\U^1_0, \quad \big(\U_0\cd \na \U_0\big)^2=\U^1_0\pa_1\U^2_0+\U^2_0\pa_2\U^2_0.
\end{align*}
It follows from \eqref{zhu-0} that
\bbal
||\int^t_0e^{(t-\tau)\De}(\U^1_0\pa_1 \U^1_0)\dd \tau||_{\B^{\frac 2p-1}_{p,1}}+||\int^t_0e^{(t-\tau)\De}(\U^2_0\pa_2 \U^2_0)\dd \tau||_{\B^{\frac 2p-1}_{p,1}}\leq C2^{-2\ep n}.
\end{align*}
Therefore, we can show that
\bal\begin{split}\label{hong-1}
||\U_1||_{\B^{\frac 2p-1}_{p,1}}&\geq c\Big(|\int_{\R^2}\int^t_0e^{-(t-\tau)|\xi|^2}\varphi(16\xi)\mathcal{F}\big(e^{\tau\De}\uu^2_0\pa_2e^{\tau\De}\uu^1_0\big)(\xi) \dd \tau\dd \xi|
\\& \quad +|\int_{\R^2}\int^t_0e^{-(t-\tau)|\xi|^2}\varphi(16\xi)\mathcal{F}\big(e^{\tau\De}\uu^1_0\pa_1e^{\tau\De}\uu^2_0\big)(\xi) \dd \tau\dd \xi|\Big)-C2^{-2\ep n}.
\end{split}\end{align}
Since $\mathrm{supp} \ \phi(\cdot-a)*\phi(\cdot-b)\subset B(a+b,1)$, we see that
\bbal
&\mathrm{supp} \ \phi(\xi-2^n\vec{e})*\phi(\xi-2^n\vec{e})\subset B(2^{n+1}\vec{e},1),
\\& \mathrm{supp} \ \phi(\xi+2^n\vec{e})*\phi(\xi+2^n\vec{e})\subset B(-2^{n+1}\vec{e},1).
\end{align*}
Then, we can rewrite
\bbal
&\quad \ 2^{2n(\frac 2p-1+\ep)}\varphi(16\xi)\mathcal{F}\big(e^{\tau\De}\uu^2_0\pa_2e^{\tau\De}\uu^1_0\big)(\xi)\\&=
-\varphi(16\xi)\int_{\R^2}\eta_2e^{-\tau(|\xi-\eta|^2+|\eta|^2)}\phi(\xi-\eta-2^n\vec{e})\phi(\eta+2^n\vec{e}) \dd \eta
\\& \quad +\varphi(16\xi)\int_{\R^2}\eta_2e^{-\tau(|\xi-\eta|^2+|\eta|^2)}\phi(\xi-\eta+2^n\vec{e})\phi(\eta-2^n\vec{e}) \dd \eta,
\end{align*}
and
\bbal
&\quad \ 2^{2n(\frac 2p-1+\ep)}\varphi(16\xi)\mathcal{F}\big(e^{\tau\De}\uu^1_0\pa_1e^{\tau\De}\uu^2_0\big)(\xi)\\&=
\varphi(16\xi)\int_{\R^2}\eta_1e^{-\tau(|\xi-\eta|^2+|\eta|^2)}\phi(\xi-\eta-2^n\vec{e})\phi(\eta+2^n\vec{e}) \dd \eta
\\& \quad-\varphi(16\xi)\int_{\R^2}\eta_1e^{-\tau(|\xi-\eta|^2+|\eta|^2)}\phi(\xi-\eta+2^n\vec{e})\phi(\eta-2^n\vec{e}) \dd \eta.
\end{align*}
Note that
\bbal
\int^t_0e^{-(t-\tau)|\xi|^2}e^{-\tau(|\xi-\eta|^2+|\eta|^2)} \dd \tau=\frac{e^{-t|\xi|^2}-e^{-t(|\eta|^2+|\xi-\eta|^2)}}{|\eta|^2+|\xi-\eta|^2-|\xi|^2}.
\end{align*}
Hence, we can show that
\bbal
&\quad \ \int^t_0e^{-(t-\tau)|\xi|^2}\varphi(16\xi)\mathcal{F}\big(e^{\tau\De}\uu^2_0\pa_2e^{\tau\De}\uu^1_0\big)(\xi) \dd \tau\\&=-2^{2n(1-\frac 2p-\ep)}\varphi(16\xi)\int_{\R^2}\frac{e^{-t|\xi|^2}-e^{-t(|\xi-\eta|^2+|\eta|^2)}}
{|\xi-\eta|^2+|\eta|^2-|\xi|^2}\Big(\eta_2\phi(\xi-\eta-2^n\vec{e})\phi(\eta+2^n\vec{e})
\\& \quad -\eta_2\phi(\xi-\eta+2^n\vec{e})\phi(\eta-2^n\vec{e})\Big) \dd \eta
\\&=2^{2n(1-\frac 2p-\ep)}\varphi(16\xi)\int_{\R^2}\frac{e^{-t|\xi|^2}-e^{-t(|\xi-\eta|^2+2|\eta|^2)}}
{|\xi-\eta|^2+|\eta|^2-|\xi|^2}
(-2\eta_2+\xi_2)\phi(\xi-\eta-2^n\vec{e})\phi(\eta+2^n\vec{e}) \dd \eta,
\end{align*}
and
\bbal
&\quad \ \int^t_0e^{-(t-\tau)|\xi|^2}\varphi(16\xi)\mathcal{F}\big(e^{\tau\De}\uu^1_0\pa_1e^{\tau\De}\uu^2_0\big)(\xi) \dd \tau\\&=2^{2n(1-\frac 2p-\ep)}\varphi(16\xi)\int_{\R^2}\frac{e^{-t|\xi|^2}-e^{-t(|\xi-\eta|^2+|\eta|^2)}}
{|\eta-\xi|^2+|\eta|^2-|\xi|^2}\Big(\eta_1\phi(\xi-\eta-2^n\vec{e})\phi(\eta+2^n\vec{e})
\\& \quad -\eta_1\phi(\xi-\eta+2^n\vec{e})\phi(\eta-2^n\vec{e})\Big) \dd \eta
\\&=2^{2n(1-\frac 2p-\ep)}\varphi(16\xi)\int_{\R^2}\frac{e^{-t|\xi|^2}-e^{-t(|\xi-\eta|^2+2|\eta|^2)}}
{|\xi-\eta|^2+|\eta|^2-|\xi|^2} (2\eta_1-\xi_1)\phi(\xi-\eta-2^n\vec{e})\phi(\eta+2^n\vec{e}) \dd \eta.
\end{align*}
Making a change of variable, we obtain
\bbal
&\quad \ \int^t_0e^{-(t-\tau)|\xi|^2}\varphi(16\xi)\mathcal{F}\big(e^{\tau\De}\uu^2_0\pa_2e^{\tau\De}\uu^1_0\big)(\xi)\dd \tau\\&=2^{2n(1-\frac 2p-\ep)}\varphi(16\xi)\int_{\R^2}\frac{e^{-t|\xi|^2}-e^{-t(|\xi-\eta+2^n\vec{e}|^2+|\eta-2^n\vec{e}|^2)}}
{|\xi-\eta+2^n\vec{e}|^2+|\eta-2^n\vec{e}|^2-|\xi|^2}(-2\eta_2+2^{n+1}+\xi_2)\phi(\xi-\eta)\phi(\eta)\dd \eta,
\end{align*}
and
\bbal
&\quad \ \int^t_0e^{-(t-\tau)|\xi|^2}\varphi(16\xi)\mathcal{F}\big(e^{\tau\De}\uu^1_0\pa_1e^{\tau\De}\uu^2_0\big)(\xi) \dd \tau\\&=2^{2n(1-\frac 2p-\ep)}\varphi(16\xi)\int_{\R^2}\frac{e^{-t|\xi|^2}-e^{-t(|\xi-\eta+2^n\vec{e}|^2+|\eta-2^n\vec{e}|^2)}}
{|\xi-\eta+2^n\vec{e}|^2+|\eta-2^n\vec{e}|^2-|\xi|^2}(2\eta_1-2^{n+1}-\xi_1)\phi(\xi-\eta)\phi(\eta)\dd \eta.
\end{align*}
Using the Taylor's formula, we infer that for $t\leq 2^{-2n}$,
\bbal
\frac{e^{-t|\xi|^2}-e^{-t\big(|\xi-\eta+2^n\vec{e}|^2+|\eta-2^n\vec{e}|^2\big)}}
{|\xi-\eta+2^n\vec{e}|^2+|\eta-2^n\vec{e}|^2-|\xi|^2}=te^{-t|\xi|^2}\big(1+O(t2^{2n})\big),
\end{align*}
which along with $t=2^{-n(2+4\ep)}$ leads to
\bal\begin{split}\label{hong-2}
& \quad \ |\int_{\R^2}\int^t_0e^{-(t-\tau)|\xi|^2}\varphi(16\xi)\mathcal{F}\big(e^{\tau\De}\uu^2_0\pa_2e^{\tau\De}\uu^1_0\big)(\xi) \dd t\dd \xi|
\\& \geq ct2^n2^{2n(1-\frac 2p-\ep)}\geq c2^{n(1-\frac {4}{p}-6\ep)},
\end{split}\end{align}
and
\bal\begin{split}\label{hong-3}
& \quad \ |\int_{\R^2}\int^t_0e^{-(t-\tau)|\xi|^2}\varphi(16\xi)\mathcal{F}\big(e^{\tau\De}\uu^1_0\pa_1e^{\tau\De}\uu^2_0\big)(\xi)\ \dd t\dd \xi|
\\& \geq ct2^n2^{2n(1-\frac 2p-\ep)}\geq c2^{n(1-\frac {4}{p}-6\ep)},
\end{split}\end{align}
for some $c>0$ independent of $n$. Combining this results \eqref{hong-1}-\eqref{hong-3}, we have
\bal\label{hong-4}
||\U_1(t)||_{\B^{\frac 2p-1}_{p,1}}\geq c2^{n(1-\frac {4}{p}-6\ep)}-C2^{-2\ep n}\geq c2^{n(1-\frac {4}{p}-6\ep)}, \qquad t=2^{-n(2+4\ep)}, \ n\gg1,
\end{align}
for some $c>0$ independent of $n$. By Lemma \ref{le3.2}, we also notice that for $T\leq 2^{-2n-4\ep n}$,
\bal\begin{split}\label{zhu-1}
&||\U_0||_{\LL^2_T(\B^{\frac 2q}_{q,1})}\lesssim T^{\frac12}||\uu_0||_{\B^{\frac 2q}_{q,1}}\lesssim 2^{n(\frac 2q-\frac 2p-3\ep)},
\\&||\U_0||_{L^1_T(\B^{\frac 2q+1}_{q,1})}\lesssim T||\uu_0||_{\B^{\frac 2q+1}_{q,1}}\lesssim 2^{n(\frac 2q-\frac 2p-5\ep)},
\end{split}\end{align}
and
\bal\label{zhu-2}
||\U_0||_{\LL^2_T(\B^{\frac 2r}_{r,1})\cap L^1_T(\B^{\frac 2r+1}_{r,1})}\lesssim T^{\frac12}||\uu_0||_{\B^{\frac 2r}_{r,1}}\lesssim 2^{n(\frac 2r-\frac 2p-3\ep)}\lesssim 2^{\frac12n(\frac 2q-\frac 2p-6\ep)}.
\end{align}
Although the norm $||\U_1||_{\LL^\infty_T(\B^{\frac 2q}_{q,1})}$ is sufficient large for $T\leq 2^{-2n-4\ep n}$. However, we can deduce that the corresponding norm $||\U_1||_{\LL^2_T(\B^{\frac 2q}_{q,1})\cap L^1_T(\B^{\frac 2q+1}_{q,1})}$ is sufficent small when $T\leq 2^{-2n-4\ep n}$. In fact, it follows from Lemmas \ref{le3.2}-\ref{le11}, H\"{o}lder's inequality and \eqref{zhu-0}, \eqref{zhu-1} that
\bal\label{li1}\begin{split}
&\quad \ ||\U_1||_{\LL^2_T(\B^{\frac 2q}_{q,1})\cap L^1_T(\B^{\frac 2q+1}_{q,1})}\\&\leq T^{\frac12}||\U_1||_{\LL^\infty_T(\B^{\frac 2q}_{q,1})\cap \LL^2_T(\B^{\frac 2q+1}_{q,1})}\leq CT^{\frac12}||\U_0\cd \na \U_0||_{L^1_T(\B^{\frac 2q}_{q,1})}
\\&\leq CT\big(||\U_0||_{\LL^\infty_T(\B^{\frac 2p}_{p,1})}||\U_0||_{\LL^2_T(\B^{\frac 2q+1}_{q,1})}+||\U_0||_{\LL^2_T(\B^{\frac 2q}_{q,1})}||\U_0||_{\LL^\infty_T(\B^{\frac 2p+1}_{p,1})}\big)
\\&\leq CT2^{n(\frac 2q-\frac 2p+2-2\ep)}\leq C2^{n(\frac 2q-\frac 2p-6\ep)}.
\end{split}\end{align}
Similar argument as in \eqref{li1}, we have for $T\leq 2^{-2n-4\ep n}$,
\bal\label{li1-1}
||\U_1||_{\LL^2_T(\B^{\frac 2r}_{r,1})\cap L^1_T(\B^{\frac 2r+1}_{r,1})}\leq C2^{n(\frac 2r-\frac 2p-6\ep)}.
\end{align}
Now, we will show that the corresponding norm of $\U_2$ is also small when $T\leq 2^{-2n-4\ep n}$. For simplicity, we denote
\bbal
\XX_T=||\U_2||_{\LL^\infty_T(\B^{\frac 2q-1}_{q,1})\cap L^1_T(\B^{\frac 2q+1}_{q,1})}, \qquad \YY_T=||h||_{\LL^\infty_T(\B^{\frac 2q}_{q,1})}.
\end{align*}
According to Lemma \ref{le3.3}, \eqref{l-z} and \eqref{zhu-1}-\eqref{li1-1}, we have
\bal\begin{split}\label{li-1}
&\quad \ ||\U_0\cd \na \U_1+\U_1\cd \na \U_0||_{L^1_T(\B^{\frac 2q-1}_{q,1})}\\&\lesssim ||\U_0||_{\LL^2_T(\B^{\frac 2r}_{r,1})}||\U_1||_{\LL^2_T(\B^{\frac 2q}_{q,1})}
+||\U_1||_{\LL^2_T(\B^{\frac {2}{r}}_{r,1})}||\U_0||_{\LL^2_T(\B^{\frac 2q}_{q,1})}\lesssim 2^{\frac32n(\frac 2q-\frac 2p-6\ep)},
\end{split}\end{align}

\bal\begin{split}\label{li-2}
||\U_1\cd \na \U_1||_{L^1_T(\B^{\frac 2q-1}_{q,1})}&\lesssim ||\U_1||_{\LL^2_T(\B^{\frac 2q}_{q,1})}||\U_1||_{\LL^2_T(\B^{\frac 2q}_{q,1})}\lesssim 2^{2n(\frac 2q-\frac 2p-6\ep)},
\end{split}\end{align}

\bal\begin{split}\label{li-3}
& \quad \ ||\U_0\cd \na \U_2+\U_2\cd \na \U_0||_{L^1_T(\B^{\frac 2q-1}_{q,1})}\\&\lesssim
||\U_0||_{\LL^2_T(\B^{\frac 2r}_{r,1})}||\U_2||_{\LL^2_T(\B^{\frac 2q}_{q,1})}+||\U_2||_{\LL^\infty_T(\B^{\frac 2q-1}_{q,1})}||\U_0||_{L^1_T(\B^{\frac 2r+1}_{r,1})}\lesssim 2^{\frac12n(\frac 2q-\frac 2p-6\ep)}\XX_T,
\end{split}\end{align}

\bal\begin{split}\label{li-4}
||\U_1\cd \na \U_2+\U_2\cd \na \U_1||_{L^1_T(\B^{\frac 2q-1}_{q,1})}&\lesssim ||\U_1||_{\LL^2_T(\B^{\frac 2q}_{q,1})}||\U_2||_{\LL^2_T(\B^{\frac 2q}_{q,1})} \lesssim 2^{n(\frac 2q-\frac 2p-6\ep)}\XX_T,
\end{split}\end{align}
and
\bal\begin{split}\label{li-5}
||\U_2\cd \na \U_2||_{L^1_T(\B^{\frac 2q-1}_{q,1})}\lesssim||\U_2||_{\LL^2_T(\B^{\frac 2q}_{q,1})}||\U_2||_{\LL^2_T(\B^{\frac 2q}_{q,1})}\lesssim \XX^2_T.
\end{split}\end{align}
Then, combining this estimates \eqref{li-1}-\eqref{li-5}, we have
\bal\label{li-01}
||\mathbf{F}_1||_{L^1_T(\B^{\frac 2q-1}_{q,1})}\lesssim 2^{n(\frac 2q-\frac 2p-6\ep)}+2^{\frac12n(\frac 2q-\frac 2p-6\ep)}\XX_T+\XX^2_T.
\end{align}
Using Lemmas \ref{le12}-\ref{le3.3} and \eqref{zhu-2}, \eqref{li1-1}, we obtain
\bal\begin{split}\label{li-02}
&\qquad ||\na \ln(1+h)\cd\na(\U_0+\U_1+\U_2)||_{L^1_T(\B^{\frac 2q-1}_{q,1})}+||\na h||_{L^1_T(\B^{\frac 2q-1}_{q,1})}\\&\leq C||\ln(1+h)||_{L^\infty_T(\B^{\frac 2q}_{q,1})}||\U_0+\U_1+\U_2||_{L^1_T(\B^{\frac 2r+1}_{r,1})}+T||h||_{\LL^\infty_T(\B^{\frac 2q}_{q,1})}
\\&\leq C(||h||_{L^\infty_T(L^\infty)}+1)^2||h||_{L^\infty_T(\B^{\frac 2q}_{q,1})}(2^{\frac12n(\frac 2q-\frac 2p-6\ep)}+\XX_T)+2^{-2n-4\ep n}\mathrm{Y}_T
\\&\leq C(1+\YY^2_T)(2^{\frac12n(\frac 2q-\frac 2p-6\ep)}+\XX_T)\YY_T.
\end{split}\end{align}
According to Lemma \ref{le3.2} and \eqref{li-01}-\eqref{li-02}, we deduce that
\bal\label{li-x}
\XX_T\lesssim 2^{n(\frac 2q-\frac 2p-6\ep)}+2^{\frac12n(\frac 2q-\frac 2p-6\ep)}(\XX_T+\YY_T+\YY^3_T)+\XX^2_T+\YY^2_T+\YY^6_T.
\end{align}
Using the fact
\bbal
||\D \uu+h\D \uu||_{L^1_T(\B^{\frac2q}_{q,1})}&\leq C(||\uu||_{L^1_T(\B^{\frac2q+1}_{q,1})}+||h||_{\LL^\infty_T(\B^{\frac2q}_{q,1})}||\uu||_{L^1_T(\B^{\frac2q+1}_{q,1})})
\\&\leq C(2^{n(\frac2q-\frac2p-5\ep)}+\XX_T)(1+\YY_T),
\end{align*}
and applying Lemma \ref{le3.1}, we infer from \eqref{zhu-1}-\eqref{li1-1} that
\bal\begin{split}\label{li-y}
\YY_T&\leq C\exp\{C||\na \uu||_{L^1_T(\B^{\frac2q}_{q,1})}\}(||h_0||_{\B^{\frac2q}_{q,1}}+||\D \uu+h\D \uu||_{L^1_T(\B^{\frac2q}_{q,1})})
\\&\leq C\exp\{C2^{n(\frac 2q-\frac 2p-5\ep)}+C\XX_T\}(2^{n(\frac2q-\frac2p-5\ep)}+\XX_T)(1+\YY_T).
\end{split}\end{align}
Denote $T_0=2^{-2n-4\ep n}$. Then, using the continuation argument, we conclude from \eqref{li-x} and \eqref{li-y} that for any $T\leq T_0$,
\bal\label{li-con}
\XX_T\leq 2^{n(\frac 2q-\frac 2p-5\ep)}, \qquad  \YY_T\leq 2^{\frac23 n(\frac 2q-\frac 2p-5\ep)}, \qquad n\gg 1.
\end{align}
Summing up \eqref{zhu-0}, \eqref{hong-4}, \eqref{li-con}, we deduce from \eqref{l-z} that
\bbal
||\uu(T_0)||_{\B^{\frac2p-1}_{p,1}}&\geq ||\U_1(T_0)||_{\B^{\frac2p-1}_{p,1}}-||\U_0(T_0)||_{\B^{\frac2p-1}_{p,1}}-||\U_2(T_0)||_{\B^{\frac2p-1}_{p,1}}
\\&\geq c2^{n(1-\frac4p-6\ep)}-C2^{n(\frac 2q-\frac 2p-5\ep)}-C2^{-n\ep}\geq c2^{n(1-\frac4p-6\ep)}, \qquad n\gg1.
\end{align*}
This along with the fact that
\bbal
||h_0||_{\B^{\frac2p}_{p,1}}+||\uu_0||_{\B^{\frac2p-1}_{p,1}}\leq 2^{-\ep n},\qquad n\gg1,
\end{align*}
implies the result of Theorem \ref{th1}.

\vspace*{1em}
\noindent\textbf{Acknowledgements.} The authors want to thank the referees for their constructive comments and helpful suggestions which greatly improved the presentation of this paper. J. Li is supported by the National Natural Science Foundation of China (Grant No.11801090).  W. Zhu is partially supported by the National Natural Science Foundation of China (Grant No.11901092) and Natural Science Foundation of Guangdong Province (No.2017A030310634).


\begin{thebibliography}{99}
\linespread{0}

\bibitem{B.C.D} H. Bahouri, J. Chemin and R. Danchin, Fourier Analysis and Nonlinear Partial Differential Equations, Grundlehren der mathematischen Wissenschaften, 343, Springer, Heidelberg (2011).
\bibitem{D} D. Bresch and B. Desjardins, Existence of global weak solutions for a 2D viscous shallow water equations and convergence to the quasi-geostrophic model, Comm. Math. Phys., 238 (2003), 211--223.
\bibitem{D.B} D. Bresch, B. Desjardins and G. Metivier, Recent mathematical results and open problem about shallow water equations, Analysis and simulation of Fluid Dynamics, Advances in Mathematical Fluid Mechanics, 15-31, Birkh\"{a}user Verlag Basel/Switzerland, 2006.
\bibitem{bui} A. Bui, Existence and uniqueness of a classical solution of an initial-boundary value problem of the theory of shallow waters, SIAM J. Math. Anal., 12 (1981), 229--241.
\bibitem{B-P} J. Bourgain, and N. Pavlovi\'c, Ill--posedness of the Navier-stokes equations in a critical space in 3D, J. Funct. Anal., 255, 9 (2008), 2233--2247.

\bibitem{C-D} F. Charve and R. Danchin, A global existence result for the compressible Navier-Stokes equations in the critical $L^p$ framework, Arch. Ration. Mech. Anal., 198 (2010), 233¨C271.


\bibitem{C-M-Z} Q. Chen, C. Miao and Z. Zhang, On the well-posedness for the viscous shallow water equations, SIAM J. Math. Anal., 40 (2008), 443--474.
\bibitem{C-M-Z2} Q. Chen, C. Miao and Z. Zhang Global well-posedness for compressible Navier-Stokes equations with highly oscillating initial velocity. Comm. Pure Appl. Math., 63  (2010), 1173--1224.
\bibitem{C-M-Z3} Q. Chen, C. Miao and Z. Zhang, Well-posedness in critical spaces for the compressible Navier-Stokes equations with density dependent viscosities, Rev. Mat. Iberoam., 26 (2010), 915--946.
\bibitem{C-M-Z4} Q. Chen, C. Miao and Z. Zhang, On the ill-posedness of the compressible Navier-Stokes equations in the critical Besov spaces, Rev. Mat. Iberoam., 31 (2015), 1375--1402.
\bibitem{D1} R. Danchin, Global existence in critical spaces for compressible Navier-Stokes equations, Invent. Math., 141 (2000), 579--614.
\bibitem{D2} R. Danchin, Local theory in critical spaces for compressible viscous and heat-conductive gases, Comm. Partial Differential Equations, 26 (2001),1183--1233.
\bibitem{D3} R. Danchin, On the uniqueness in critical spaces for compressible Navier-Stokes equations, Nonlinear Differential Equations Appl., 12 (2005), 111--128.
\bibitem{Dan17} R. Danchin and P.B. Mucha, Compressible Navier-Stokes system: large solutions and incompressible limit, Adv. Math. 320 (2017), 904--925.
\bibitem{Fan18} D. Fang, T. Zhang, and R. Zi, Global solutions to the isentropic compressible Navier-Stokes equations with a class of large initial data, SIAM J. Math. Anal. 50(5) (2018), 4983--5026.


\bibitem{H1} B. Haspot, Existence of global strong solutions in critical spaces for barotropic viscous fluids, Arch. Ration. Mech. Anal., 202 (2011), 427--460.
\bibitem{H2} B. Haspot, Well-posedness in critical spaces for the system of compressible Navier-Stokes in larger spaces, J. Differential Equations, 251 (2011), 2262--2295.

\bibitem{K} P. Kloeden, Global existence of classical solutions in the dissipative shallow water equations, SIAM J. Math. Anal., 16 (1985), 301--315.
\bibitem{L-Y} Y. Liu, and Z. Yin, Global existence and well-posedness of 2D viscous shallow water system in Sobolev spaces with low regularity,  J. Math. Anal. Appl. 438 (2016), 14--28.
\bibitem{L-Y2} Y. Liu and Z. Yin, Global existence and local well-posedness of the 2D viscous shallow water system in Sobolev spaces, Appl. Anal., 95 (2016), 78--96.
\bibitem{L-Y1} Y. Liu and Z. Yin, Global existence and well-posedness of the 2D viscous shallow water system in Besov spaces, Nonlinear Anal. Real World Appl., 24 (2015), 1--17.
\bibitem{M-N} A. Matsumura and T. Nishida, The initial value problem for the equations of motion of viscous and heat-conductive gases, J. Math. Kyoto Univ., 20 (1980), 67--104.
\bibitem{S1} L. Sundbye, Global existence for the Dirichlet problem for the viscous shallow water equations, J. Math. Anal. Appl., 202 (1996), 236--258.
\bibitem{S2} L. Sundbye, Global existence for the Cauchy problem for the viscous shallow water equations, Rocky Mountain J. Math., 28 (1998), 1135--1152.
\bibitem{W} W. Wang and C. Xu, The Cauchy problem for viscous shallow water equations, Rev. Mat. Iberoamericana, 21 (2005), 1--24.
\end{thebibliography}
\end{document}